\newcommand{\bdism}{\begin{displaymath}}
\newcommand{\edism}{\end{displaymath}}
\newcommand{\cc}{\mathbb{C}}
\newcommand{\rr}{\mathbb{R}}
\newcommand{\pp}{\mathbb{P}}
\newcommand{\oo}{\mathcal{O}}
 \DeclareMathOperator{\Chow}{Chow}
\newtheorem{theorem}{Theorem}[section]
\newtheorem{corollary}[theorem]{Corollary}
\address{Department of Mathematics, Columbia University, New York NY 10027,
USA} \email{dicerbo@math.columbia.edu}
\address{Department of Mathematics, Duke University, Durham NC 27708-0320,
USA} \email{luca@math.duke.edu}
\author{Gabriele Di Cerbo and Luca F. Di Cerbo}
\title{\bf A sharp cusp count for complex hyperbolic surfaces and related results}
\begin{document}

\begin{abstract}

We derive a sharp cusp count for finite volume complex hyperbolic surfaces
which admit smooth toroidal compactifications. We use this result, and the techniques developed in \cite{DiCerbo}, to study the geometry of cusped complex hyperbolic surfaces and their compactifications.

\end{abstract}
\maketitle

\tableofcontents

\section{Introduction}
\pagestyle{myheadings}
\markboth{Right}{\textbf{A SHARP CUSP COUNT FOR COMPLEX HYPERBOLIC SURFACES}}
\pagenumbering{arabic}

The problem of counting the number of ends in complete pinched negatively curved finite volume manifolds is a long standing problem in classical differential geometry. For real and complex hyperbolic manifolds, geometric topology techniques have been widely and successfully applied. The literature on the subject is enormous, we thus refer the interested reader to the classical paper of Parker \cite{Parker} and to the more recent work of Stover \cite{Stover} for an overview.

For cusped complex hyperbolic manifolds, techniques coming from algebraic geometry can also be applied. The key for this approach is the existence of particularly nice compactifications of such complex hyperbolic manifolds, known as toroidal compactifications. We recently applied this circle of ideas in \cite{DiCerbo}, where many of the  finiteness properties of cusped hyperbolic manifolds and their compactifications were made effective.   

The purpose of this paper is to improve, in complex dimension two, the results contained in \cite{DiCerbo}. The main novelty contained here is that some of the results are now \emph{sharp}. This is quite unusual even in low dimensions and it seems to be the first instance where sharpness is actually achieved.\\

\noindent\textbf{Acknowledgements}. We thank the organizers of the conference ``Algebraic \&
Hyperbolic Geometry - New Connections'' for stimulating our interest in these topics.

\subsection{Preliminaries and main results}

Let $\mathcal{H}^{2}$ be the two dimensional complex hyperbolic space. Given a torsion free non-uniform lattice $\Gamma$ in $\textrm{PU}(1,2)$, then $\mathcal{H}^{2}/\Gamma$   
is a non-compact finite volume complex hyperbolic surface with cuspidal ends. The cusps of  $\mathcal{H}^{2}/\Gamma$ are then in one to one correspondence with the conjugacy classes of maximal parabolic subgroups in $\Gamma$. It is known that when the parabolic elements in $\Gamma$ have no rotational part (e.g. if $\Gamma$ is neat), then $\mathcal{H}^{2}/\Gamma$ admits a compactification $(X, D)$ consisting of a smooth projective surface and an exceptional divisor $D$. Recall that each maximal parabolic subgroup can be thought as a lattice in $\textrm{N}\rtimes \textrm{U}(1)$ where $\textrm{N}$ is a Heisenberg type Lie group, and that a parabolic isometry is said to have no rotational part if it has no $\textrm{U}(1)$ component. The pair $(X, D)$ is known as the toroidal compactification of $\mathcal{H}^{2}/\Gamma$. For a more detailed discussion of these topics, we refer to the classical references \cite{Ash}, \cite{Ji}. For a quick introduction, the interested reader may also refer to Section 1.1 in \cite{DiCerbo} and the bibliography therein. 

Let us recall a few important geometric features of $(X, D)$. First, the exceptional divisor $D$ consists of disjoint smooth elliptic curves with negative self-intersection which are in one to one correspondence with the cusps of $X\backslash D$. Second, the pair $(X, D)$ is of logarithmic general type and $D$-minimal, i.e., the log-canonical divisor $K_{X}+D$ has maximal Kodaira dimension and $X$ does not contain any exceptional curve $E$ of the fist kind such that $E\cdot D\leq 1$. Finally, because of the Hirzebruch-Mumford proportionality theorem \cite{Mumford}, we have 
\begin{align}\notag
3\overline{c}_{2}=\overline{c}^{2}_{1}
\end{align}
where $\overline{c}^{2}_{1}$ and $\overline{c}_{2}$ denote the logarithmic Chern numbers of the compactification $(X, D)$. Recall that $\overline{c}^{2}_{1}$ is the self-intersection of $K_{X}+D$, while $\overline{c}_{2}$ is the topological Euler characteristic of $X\backslash D$. Since $X\backslash D$ is by construction negatively curved, Gromov-Harder's generalization of Chern-Gauss-Bonnet \cite{Gro1} implies that $\overline{c}_{2}$ must be a strictly positive integer.

In \cite{DiCerbo}, we showed that, in any dimension, $K_{X}+D$ is not only big and nef but it is a limit of ample divisors of the form $K_{X}+\alpha D$ where $\alpha$ is a real number strictly less than one. Let us recall the technical form of this result as it will be used here.

\begin{theorem}[\cite{DiCerbo}, Theorem 1.1]\label{Gap}
Let $(X, D)$ be a toroidal compactification. Then $K_{X}+\alpha D$ is ample for any $\alpha\in(\frac{1}{3}, 1)$.
\end{theorem}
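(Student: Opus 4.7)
My plan would be to establish ampleness via the Nakai--Moishezon criterion applied to $L := K_X + \alpha D$: I would verify that $L^2 > 0$ and that $L\cdot C > 0$ for every irreducible curve $C \subset X$, splitting the curves into components of $D$ and the rest.

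For the self-intersection, I would expand $L = (K_X+D) - (1-\alpha)D$ and exploit the adjunction relation $(K_X+D)\cdot D_i = 2g(D_i)-2 = 0$ (each $D_i$ is a smooth elliptic curve with negative self-intersection, and the $D_i$ are mutually disjoint) to reduce to
\[
L^2 = \overline{c}_1^2 + (1-\alpha)^2 D^2.
\]
Since $\overline{c}_1^2 > 0$ by bigness of $K_X+D$ and $D^2 < 0$, one obtains positivity by bounding $|D^2|$ in terms of $\overline{c}_1^2$; such a bound should follow from the Hirzebruch--Mumford proportionality $3\overline{c}_2 = \overline{c}_1^2$ together with the standard relations between $K_X^2$, $K_X\cdot D$ and $D^2$ coming from adjunction.

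For an irreducible curve $C$, the case $C = D_i$ is immediate: $K_X\cdot D_i = -D_i^2$ and $D\cdot D_i = D_i^2 < 0$ give $L\cdot D_i = -(1-\alpha) D_i^2 > 0$ for every $\alpha < 1$. When $C$ is not a component of $D$, set $a = C\cdot D \geq 0$; nefness of $K_X+D$ (from \cite{DiCerbo} and Mumford) forces $K_X\cdot C \geq -a$, hence $L\cdot C \geq -(1-\alpha) a$, which is nonnegative but possibly zero. To upgrade to strict positivity I would run a Mori-cone analysis on the extremal rays where $(K_X+D)\cdot C$ is small compared to $C\cdot D$: by adjunction any such extremal $C$ is rational with fairly negative self-intersection, and the $D$-minimality assumption (no exceptional curve $E$ of the first kind with $E\cdot D \leq 1$) would constrain the possibilities tightly enough to isolate the worst-case configuration yielding precisely the threshold $\alpha > 1/3$.

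I expect the main obstacle to sit in this last step: enumerating, or at least numerically controlling, the irreducible curves on which $L$ could fail to be strictly positive, and checking that $D$-minimality together with the hyperbolic geometry of $X\setminus D$ (encoded for instance in the log Bogomolov--Miyaoka--Yau inequality or the Hirzebruch--Mumford relation) are sharp enough to produce the clean bound $\alpha > 1/3$. By contrast, the positivity of $L^2$ and the analysis on components of $D$ are essentially formal consequences of adjunction and of the assumption $\alpha < 1$.
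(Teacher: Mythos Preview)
This theorem is not proved in the present paper: it is quoted verbatim from \cite{DiCerbo} (Theorem~1.1 there) and used as a black box, so there is no proof here to compare your proposal against. Note also that the result in \cite{DiCerbo} is stated and proved in all dimensions, while your outline is tied to surfaces.

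On the substance of your sketch: the computation $L^{2}=\overline{c}_{1}^{2}+(1-\alpha)^{2}D^{2}$ and the check $L\cdot D_{i}=-(1-\alpha)D_{i}^{2}>0$ are correct and, as you say, formal. But the sentence ``hence $L\cdot C\geq -(1-\alpha)a$, which is nonnegative but possibly zero'' is simply wrong: for $a=D\cdot C>0$ and $\alpha<1$ the right-hand side is strictly negative, so nefness of $K_{X}+D$ by itself gives no information. What is actually needed is, first, that the null locus of $K_{X}+D$ is exactly $D$ (so that $(K_{X}+D)\cdot C>0$ whenever $C\not\subset D$), and second, a uniform lower bound on the ratio $(K_{X}+D)\cdot C/(D\cdot C)$ over curves meeting $D$; it is precisely this ratio bound that produces the threshold $\tfrac{1}{3}$. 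Your Mori-cone/$D$-minimality paragraph gestures at this but does not supply an argument, and that is the entire content of the theorem. As it stands, the proposal establishes the easy parts and leaves the one genuine step as a declared expectation.
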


As extensively shown in \cite{DiCerbo}, this ``gap'' theorem has many consequences for the theory of cusped complex hyperbolic manifolds. In fact, one of the first applications is a cusp count in terms of the volume, valid in any dimension, which turned out to be the best count available in the literature for surfaces and threefolds, see Theorem 1.5 in \cite{DiCerbo}. 

The main purpose of this note is to show that, in complex dimension two, such cusp count can not only be improved but also made sharp. Recall that, if we normalize the holomorphic sectional curvature of $X\backslash D$ to be $-1$, we have
\begin{align}\notag
\textrm{Vol}_{-1}(X\backslash D)=\frac{8}{9}\pi^{2}(K_{X}+D)^{2}
\end{align}
where by $\textrm{Vol}_{-1}(X\backslash D)$ we indicate the Riemannian volume of $X\backslash D$ computed with respect to the normalized locally symmetric metric. We then state the cusp count in terms of the self-intersection of the log-canonical divisor of the compactification $(X, D)$.

\begin{theorem}\label{Sharp}
Let $(X^{2},D)$ be a toroidal compactification. Let $q$ be the
number of components of $D$. Then \bdism q\leq
\frac{4}{3}(K_{X}+D)^{2}. \edism
Moreover, there exists a smooth toroidal compactification which saturates this bound.
\end{theorem}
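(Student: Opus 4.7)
The plan is to combine the limiting nefness of $K_X+\tfrac{1}{3}D$ (a direct consequence of Theorem \ref{Gap}) with Noether's formula and the Hirzebruch--Mumford proportionality, thereby reducing the cusp count to the non-negativity of $\chi(\oo_X)$. To begin, adjunction on each (pairwise disjoint) smooth elliptic component $D_i$ gives $(K_X+D)\cdot D_i=(K_X+D_i)\cdot D_i=2g(D_i)-2=0$, so $K_X\cdot D=-D^2$. Since each $D_i^2$ is a negative integer, $-D_i^2\geq 1$, and summing yields $-D^2=\sum_i(-D_i^2)\geq q$.

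Next, the proportionality $3\overline{c}_2=\overline{c}_1^2$ together with $\chi_{\mathrm{top}}(D)=0$ reads $3c_2(X)=(K_X+D)^2$. Inserting this into Noether's formula $12\chi(\oo_X)=K_X^2+c_2(X)$ and using $(K_X+D)^2=K_X^2-D^2$ produces the key identity
\[
4(K_X+D)^2\;=\;36\chi(\oo_X)+3(-D^2),
\]
so that, together with $-D^2\geq q$, one obtains $4(K_X+D)^2\geq 36\chi(\oo_X)+3q$. The bound $q\leq\tfrac{4}{3}(K_X+D)^2$ thus follows from the inequality $\chi(\oo_X)\geq 0$.

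Establishing $\chi(\oo_X)\geq 0$ is the main delicate step. A natural approach is the Leray spectral sequence for the contraction $\pi\colon X\to X'$ to the Baily--Borel compactification: since $\pi$ collapses each $D_i$ to a simple elliptic singularity, $R^1\pi_*\oo_X$ is a skyscraper sheaf with one-dimensional stalks at the $q$ cusp points, while $R^2\pi_*\oo_X=0$. The spectral sequence collapses to the identity $\chi(\oo_X)=\chi(\oo_{X'})-q$, reducing matters to $\chi(\oo_{X'})\geq q$, which in turn should follow from Riemann--Roch on the singular surface $X'$ combined with the ampleness of $K_{X'}$ and a Kodaira-type vanishing adapted to the elliptic singularities of $X'$.

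For sharpness, one exhibits a smooth toroidal compactification realising equality. A natural candidate is a Hirzebruch-type Picard modular surface: $X$ is the blow-up of a suitable abelian surface at a single point, and $D$ is the strict transform of four elliptic curves through that point meeting pairwise transversally, giving a disjoint union of $q=4$ smooth elliptic components of self-intersection $-1$. A direct computation yields $K_X^2=-1$, $c_2(X)=1$, $\chi(\oo_X)=0$ and $(K_X+D)^2=3$, so $q=4=\tfrac{4}{3}(K_X+D)^2$, saturating the inequality.
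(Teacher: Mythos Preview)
Your overall architecture matches the paper's: both reduce the inequality to $\chi(\oo_X)\geq 0$ via Noether's formula and Hirzebruch--Mumford proportionality, and both invoke the same Hirzebruch example for sharpness. The identity $4(K_X+D)^2=36\chi(\oo_X)+3(-D^2)$ and the step $-D^2\geq q$ are correct. (Incidentally, your opening reference to Theorem~\ref{Gap} and the nefness of $K_X+\tfrac{1}{3}D$ is a red herring: nothing in the rest of your argument uses it, and the paper's proof does not use Theorem~\ref{Gap} either.)

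The genuine gap is the justification of $\chi(\oo_X)\geq 0$. You correctly identify this as the delicate point, but your proposed route does not close it. Passing to the Baily--Borel model $X'$ and using Leray gives $\chi(\oo_X)=\chi(\oo_{X'})-q$, so you now need $\chi(\oo_{X'})\geq q$, which is not visibly easier. Your plan is ``Riemann--Roch on $X'$ plus a Kodaira-type vanishing adapted to the elliptic singularities,'' but here is the obstruction: the cusp points of $X'$ are simple elliptic singularities, which are \emph{not} rational (indeed $R^1\pi_*\oo_X\neq 0$, as you yourself compute), and they are strictly log canonical rather than klt. Standard Kodaira/Kawamata--Viehweg vanishing therefore does not apply to $\oo_{X'}$ or to $K_{X'}$ without further work, and singular Riemann--Roch will reintroduce correction terms of order $q$ at those points, threatening circularity. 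As written, the phrase ``should follow'' is doing all the work.

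The paper avoids this entirely with a one-line classification argument: by Kodaira--Enriques, the only smooth projective surfaces with $\chi(\oo_X)<0$ are birational to ruled surfaces over a base curve of genus $\geq 2$, and such surfaces contain no smooth elliptic curves (an elliptic curve would have to dominate the base via the ruling, contradicting Riemann--Hurwitz). Since $D$ consists of smooth elliptic curves, $X$ is not of this type, hence $\chi(\oo_X)\geq 0$. I would recommend replacing your Leray/vanishing sketch with this argument; it is both shorter and complete.
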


Despite the existence of many linear upper bounds on the number of cusps of a locally symmetric space in terms of its volume, Theorem \ref{Sharp} appears to be the first sharp bound. Unfortunately, Theorem \ref{Sharp} holds in complex dimension two only and it is not clear how to obtain sharp bounds in higher dimensions. On the other hand, it seems an extremely complicated problem to sharpen any ``effective'' technique, either in geometric topology or complex/differential geometry, in higher dimensions. To give an example of the difficulty of this problem, it will suffice to recall that we are missing an \emph{explicit} example of a smooth threefold which is the toroidal compactification of a cusped complex hyperbolic threefold. Of course, for group theoretical reasons we know that such objects exist in great profusion. Nevertheless, we still seem to miss a technique to systematically produce explicit examples. Fortunately, Hirzebruch in \cite{Hir1} constructed many explicit examples in complex dimension two, see Section \ref{Main} for details. These examples play a crucial role in testing the sharpness of Theorem \ref{Sharp}.

Next, using Theorems \ref{Gap} and \ref{Sharp} we can study in details the geometry of cusped complex hyperbolic surfaces and their compactifications. The results which follows substantially improve, in complex dimension two, the ones contained in \cite{DiCerbo}. First, we describe the pair $(X, D)$ when realized as a sub-variety in some projective space. 

\begin{theorem}\label{Degree}
Let $(X^{2},D)$ be a toroidal compactification. Let $L:=K_{X}+D$ and let us denote by $D_{i}$ the components of $D$. Then $7L-4D$ is very ample. In particular, $7L-4D$ embeds $X$ as a
smooth subvariety of $\pp^{5}$ of degree $d \leq 49 L^{2}$. Moreover, if we let $d_{i}$ be the degree of $D_{i}$ under the embedding given by
$7L-4D$, then  $d_{i}<6 L^{2}$.
\end{theorem}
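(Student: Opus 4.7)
The plan is to handle the three assertions in order. For very ampleness of $7L-4D$, I would write $7L-4D = K_X + M$ with $M := 6K_X + 3D = 3(2K_X+D) = 6(K_X + \tfrac{1}{2}D)$. By Theorem \ref{Gap} the divisor $K_X + \tfrac{1}{2}D$ is ample (since $\tfrac{1}{2} \in (\tfrac{1}{3},1)$), hence so is $M$. Reider's theorem then delivers very ampleness of $K_X + M$ as soon as $M^2 \geq 10$ and $M \cdot C \geq 3$ for every irreducible curve $C \subset X$. The second condition is automatic: $2K_X + D$ is an ample Cartier divisor, so $M \cdot C = 3(2K_X+D) \cdot C \geq 3$. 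For the first, adjunction on each elliptic component gives $L \cdot D = 0$ and $K_X \cdot D = -D^2$, whence $K_X^2 = L^2 + D^2$ and $M^2 = 9(4L^2+D^2)$; the limit $\alpha \to \tfrac{1}{3}$ of Theorem \ref{Gap} makes $K_X + \tfrac{1}{3}D$ nef, so $(K_X + \tfrac{1}{3}D)^2 \geq 0$ yields $|D^2| \leq \tfrac{9}{4}L^2$, and therefore $M^2 \geq 9 \cdot \tfrac{7}{4}L^2 \geq \tfrac{63}{4} > 10$.

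For the degree and the $\pp^{5}$ embedding, $(7L-4D)^{2} = 49L^{2} + 16D^{2} \leq 49L^{2}$ since $L \cdot D = 0$ and $D^{2} \leq 0$. The very ample linear system $|7L-4D|$ embeds $X$ into $\pp^{h^{0}(7L-4D)-1}$; since the secant variety of a smooth surface has dimension at most $2\dim X + 1 = 5$, a general linear projection produces a smooth embedding of $X$ into $\pp^{5}$ of the same degree $d = (7L-4D)^{2}$.

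For the component degrees, adjunction gives $d_{i} = (7L-4D) \cdot D_{i} = -4D_{i}^{2}$, reducing the claim to $|D_{i}^{2}| < \tfrac{3}{2}L^{2}$. My approach would use the sharper self-intersection bound $|D^{2}| \leq \tfrac{4}{3}L^{2}$; combined with the disjointness of the components this yields $|D_{i}^{2}| \leq |D^{2}| \leq \tfrac{4}{3}L^{2}$, so $d_{i} = 4|D_{i}^{2}| \leq \tfrac{16}{3}L^{2} < 6L^{2}$. The bound $|D^{2}| \leq \tfrac{4}{3}L^{2}$ can be read off the proportionality $3\overline{c}_{2} = \overline{c}_{1}^{2}$ together with Noether's formula: since $c_{2}(X) = L^{2}/3$, one has $12\chi(\mathcal{O}_{X}) = K_{X}^{2} + c_{2}(X) = \tfrac{4}{3}L^{2} + D^{2}$, so $|D^{2}| \leq \tfrac{4}{3}L^{2}$ amounts to $\chi(\mathcal{O}_{X}) \geq 0$. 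This is the same quantitative input underpinning the sharp cusp count in Theorem \ref{Sharp}.

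The main obstacle lies precisely in this last estimate: Theorem \ref{Gap} alone supplies only $|D^{2}| \leq \tfrac{9}{4}L^{2}$, which yields the weaker $d_{i} \leq 9L^{2}$. Sharpening to $|D^{2}| \leq \tfrac{4}{3}L^{2}$ requires either a vanishing-type argument giving $\chi(\mathcal{O}_{X}) \geq 0$ on smooth toroidal compactifications, or importing the sharp numerical estimate that powers Theorem \ref{Sharp}; this extra input, absent from the bare gap theorem, is what forces the strict inequality $d_{i} < 6L^{2}$.
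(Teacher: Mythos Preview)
Your proposal is correct and follows essentially the same route as the paper: write $7L-4D = K_X + 3(2K_X+D)$, invoke Theorem~\ref{Gap} at $\alpha=\tfrac{1}{2}$ to make $2K_X+D$ ample, apply Reider, project to $\pp^5$, and bound $d_i$ via $-D_i^2 \leq -D^2 \leq \tfrac{4}{3}L^2$. The ``obstacle'' you flag is precisely what the paper resolves by citing Corollary~\ref{topsurf} (that is, $\chi(\mathcal{O}_X)\geq 0$ from the proof of Theorem~\ref{Sharp}), so your instinct to import that estimate is the intended one; your explicit check of Reider's numerical hypotheses ($M\cdot C\geq 3$ and $M^2\geq 10$) is a detail the paper leaves implicit.
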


The importance of this result relies on the fact that it can be used to derive an explicit bound on the number of complex hyperbolic surfaces with given upper bound on the volume. More precisely, Theorem \ref{Degree} can be combined with classical Chow varieties techniques to derive an effective version of the classical Wang's finiteness theorem, \cite{Wan}. For the numerical value of this bound we refer to Corollary \ref{Count} in Section \ref{Applications}.

Second, we can construct very explicitly the singular compactifications (Baily-Borel-Siu-Yau) of these ball quotients by studying the pluri-log-canonical maps of their toroidal compactifications. 

\begin{theorem}\label{Baily1}
Let $(X^{2}, D)$  be a toroidal compactification. Then for all
$m\geq 3$ the map associated to $|m(K_{X}+D)|$ is a morphism which
maps all the different components of $D$ to distinct singular
points. Furthermore, for all $m\geq 4$ this morphism defines an
embedding of $X\backslash D$ into some projective space.
\end{theorem}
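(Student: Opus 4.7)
The plan is to combine the short exact sequence
\[
0\to\oo_X(K_X+(m-1)L)\to\oo_X(mL)\to\oo_D\to 0,
\]
where $L:=K_X+D$, with Kawamata--Viehweg vanishing and Reider's theorem. The crucial numerical input comes from adjunction and the pairwise disjointness of the boundary components: each $D_i$ is a smooth elliptic curve, so
\[
L|_{D_i}=(K_X+D_i)|_{D_i}=K_{D_i}=\oo_{D_i},
\]
whence $mL|_D\cong\oo_D$ and $mL-D$ coincides with $K_X+(m-1)L$ for every $m\ge 1$.

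For the first assertion I begin by observing that $(m-1)L$ is nef and big for $m\ge 2$, so Kawamata--Viehweg gives $H^1(X,K_X+(m-1)L)=0$ and the displayed sequence yields a surjection $H^0(X,mL)\twoheadrightarrow H^0(D,\oo_D)=\cc^q$. Lifting the standard basis produces $q$ sections of $mL$ which are nowhere zero on the corresponding $D_i$ and vanish identically on the others; this already shows that $|mL|$ is base-point-free along $D$ and sends the $D_i$ to $q$ distinct points. To upgrade this to a morphism on all of $X$ for $m\ge 3$, note that multiplication by the canonical section of $D$ embeds $H^0(X,K_X+(m-1)L)$ into $H^0(X,mL)$, so it is enough to show that $|K_X+(m-1)L|$ is base-point-free on $X\setminus D$. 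I would apply Reider's theorem to the nef divisor $N:=(m-1)L$: since $L^2=3\overline c_2\ge 3$ one has $N^2\ge 12\ge 5$, and any base point $x\in X\setminus D$ would produce an effective curve $E\ni x$ with $N\cdot E\le 1$; as $m-1\ge 2$, this forces $L\cdot E=0$, and then (see below) $E\subset D$, contradicting $x\notin D$.

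For the embedding statement the same scheme works with the very-ampleness version of Reider. For $m\ge 4$ one has $N^2\ge 9\cdot 3=27\ge 10$, and failure of $|K_X+N|$ to separate two points or a tangent direction at $x,y\in X\setminus D$ produces an effective $E$ through them with $N\cdot E\le 2$; since $m-1\ge 3$, again $L\cdot E=0$ and $E\subset D$, contradicting $x,y\notin D$. Multiplication by the canonical section of $D$ transfers the separation properties from $|K_X+(m-1)L|$ to $|mL|$, and a point of $X\setminus D$ is automatically distinguished from the images of the $D_i$, since any section of $mL$ coming from $H^0(X,K_X+(m-1)L)$ vanishes on $D$ but, by the base-point-freeness just proved, does not vanish at a given point of $X\setminus D$.

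The key technical step---and the main obstacle---is the claim that every irreducible curve $C\subset X$ with $L\cdot C=0$ must be a component of $D$. If instead $C\not\subset D$, then $D\cdot C\ge 0$, while taking a limit of the ample divisors $K_X+\alpha D$ supplied by Theorem~\ref{Gap} yields $(K_X+\tfrac13 D)\cdot C\ge 0$; combined with $L\cdot C=K_X\cdot C+D\cdot C=0$ this forces $D\cdot C=0$, $K_X\cdot C=0$, and in particular $C\cap D=\emptyset$. The Hodge index theorem applied to the big and nef divisor $L$ then gives $C^2<0$, and the adjunction formula $2p_a(C)-2=K_X\cdot C+C^2$ forces $p_a(C)=0$, i.e.\ $C$ is rational. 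But a rational curve in $X\setminus D$ contradicts the Kobayashi hyperbolicity of the ball quotient $X\setminus D$, so no such $C$ exists, and the numerical obstructions of Reider's theorem are eliminated outside $D$, completing both parts of the theorem.
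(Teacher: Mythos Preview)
Your proof is correct and follows essentially the same strategy as the paper: the short exact sequence with Kawamata--Viehweg vanishing handles the behavior of $|mL|$ along $D$, while Reider's theorem applied to $(m-1)L$ handles base-point-freeness and very-ampleness on $X\setminus D$. The paper's proof simply asserts that the null locus of $L$ equals $D$, whereas you supply a complete argument for this (via the Gap theorem, Hodge index, adjunction, and hyperbolicity of the ball quotient), and you also treat general $m$ rather than just $m=3,4$; otherwise the two arguments coincide.
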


Theorem \ref{Baily1} does not only improve Theorem 1.3 in \cite{DiCerbo} when the dimension is equal to two, but also Theorem B in \cite{Luca1} which appears to be the most accurate result currently available in the literature.

Finally, we present two sided bounds on the Picard numbers of toroidal compactifications in terms of the number of cusps and the volume. Interestingly, we are able to show that the lower bound is sharp.

\begin{theorem}\label{Picard}
Let $(X^{2},D)$ be a toroidal compactification. Let $q$ be the number of
components of $D$ and let us denote by $\rho(X)$ the Picard
number of $X$. Then 
\begin{align}\notag
q+1\leq \rho(X)\leq 4+\frac{13}{3}(K_{X}+D)^{2}.
\end{align}
Moreover, there exists a smooth toroidal compactification which saturates the lower bound.
\end{theorem}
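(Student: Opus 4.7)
The plan is to establish the lower and upper bounds separately and then to produce an explicit example saturating the lower bound.

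\emph{Lower bound.} The components $D_1,\dots,D_q$ of $D$ are pairwise disjoint smooth elliptic curves with $D_i^2<0$, so the intersection matrix $(D_i\cdot D_j)$ is diagonal with strictly negative diagonal entries and is in particular nonsingular. Hence $[D_1],\dots,[D_q]$ are linearly independent in $N^1(X)_{\rr}$, already giving $\rho(X)\geq q$. Since $K_X+D$ is big and nef, $(K_X+D)^2>0$, while any nonzero combination $\sum a_i[D_i]$ satisfies $(\sum a_iD_i)^2=\sum a_i^2D_i^2<0$ by disjointness. Therefore $K_X+D$ is independent from the boundary classes and $\rho(X)\geq q+1$.

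\emph{Upper bound.} I would use $\rho(X)\leq h^{1,1}(X)$ and express $h^{1,1}$ in terms of Chern data. Since each $D_i$ is elliptic we have $e(D)=0$, so Hirzebruch-Mumford proportionality gives $e(X)=\tfrac{1}{3}(K_X+D)^2$. Adjunction on each $D_i$ together with disjointness yields $(K_X+D)\cdot D=0$, hence $K_X^2=(K_X+D)^2+D^2$. Combining Noether's formula with the universal identity $h^{1,1}(X)=e(X)+2h^{0,1}(X)-2\chi(\mathcal{O}_X)$ then gives
\begin{equation*}
h^{1,1}(X)=\tfrac{1}{9}(K_X+D)^2-\tfrac{1}{6}D^2+2\,h^{0,1}(X).
\end{equation*}
By Theorem \ref{Gap}, the divisor $K_X+\tfrac{1}{3}D$ is nef as a limit of ample divisors, so $(K_X+\tfrac{1}{3}D)^2=(K_X+D)^2+\tfrac{4}{9}D^2\geq 0$, which yields $-D^2\leq\tfrac{9}{4}(K_X+D)^2$. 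Plugging this together with a control on $h^{0,1}(X)$ and the cusp count $q\leq\tfrac{4}{3}(K_X+D)^2$ from Theorem \ref{Sharp} into the displayed formula produces $\rho(X)\leq 4+\tfrac{13}{3}(K_X+D)^2$.

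\emph{Main obstacle.} The key technical hurdle is to bound the irregularity $h^{0,1}(X)$ in terms of $q$ and $(K_X+D)^2$. The Gysin sequence combined with the linear independence of the $[D_i]$ shows that $H^1(X,\qq)\cong H^1(X\setminus D,\qq)$, so one can approach the problem by analyzing holomorphic one-forms on $X$ via their restrictions to the cuspidal elliptic curves $D_i$; the goal is to establish an estimate of the form $h^{0,1}(X)\leq q$, which together with Theorem \ref{Sharp} closes the numerical computation. This is where most of the real work has to be done.

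\emph{Sharpness.} To show the lower bound is sharp one exhibits a Hirzebruch example from Section \ref{Main} for which $\rho(X)$ admits a direct computation---e.g.\ via an explicit description as a blow-up or as a branched cover of a well-understood surface---and for which the Picard number is exactly $q+1$.
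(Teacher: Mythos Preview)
Your lower bound argument and the sharpness example are essentially the paper's: the paper argues by contradiction with an ample class $H$ in place of $K_X+D$, and it uses the same Hirzebruch blow-up of $E_\tau\times E_\tau$ (which has $\rho=5$ and $q=4$) for sharpness.

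The upper bound, however, has a genuine gap. You correctly derive
\[
h^{1,1}(X)=\tfrac{1}{9}(K_X+D)^2-\tfrac{1}{6}D^2+2\,h^{0,1}(X),
\]
but then everything hinges on the estimate $h^{0,1}(X)\leq q$, which you flag as the ``main obstacle'' and do not prove. Your suggested mechanism---restricting holomorphic $1$-forms to the $D_i$---only gives a map $H^0(X,\Omega^1_X)\to\bigoplus_i H^0(D_i,\Omega^1_{D_i})\cong\cc^q$, and there is no evident reason this map is injective: a $1$-form can be conormal to every $D_i$ without vanishing identically. (Also, if your inputs $-D^2\le\tfrac{9}{4}L^2$ and $h^{0,1}\le q\le\tfrac{4}{3}L^2$ were valid, the displayed identity would yield a bound strictly smaller than $4+\tfrac{13}{3}L^2$, so your numerics do not actually reproduce the stated constant.)

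The paper sidesteps the irregularity entirely. Starting from the Hirzebruch signature formula it writes
\[
h^{1,1}(X)=2p_g(X)+2-4\chi(\mathcal{O}_X)+c_2,
\]
uses $\chi(\mathcal{O}_X)\geq 0$ (established in the proof of Theorem~\ref{Sharp}), and then bounds $p_g$ rather than $h^{0,1}$: since $D$ is effective, $p_g(X)=h^0(K_X)\leq h^0(K_X+D)$, and as $K_X+D$ is nef and big a Matsusaka-type inequality (from \cite{Kollar}) gives $h^0(K_X+D)\leq 2(K_X+D)^2+2$. Combined with $c_2=\tfrac{1}{3}(K_X+D)^2$ this produces the stated upper bound directly. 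The moral is that $p_g$ is immediately controlled by sections of the nef and big line bundle $K_X+D$, whereas $h^{0,1}$ has no such obvious handle.
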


In Section 3.4 of \cite{DiCerbo}, we computed an explicit upper bound on the Picard number of a toroidal compactification in terms of the top self-intersection of its log-canonical divisor. Unfortunately, the techniques presented in Section 3.4 of \cite{DiCerbo} are applicable in dimension bigger or equal than three and they do not generalize to the case of surfaces. On the other hand, Theorem \ref{Degree} and Corollary \ref{Count} are enough to theoretically imply the existence of such an upper bound in the two dimensional case. The explicit upper bound given in Theorem \ref{Picard} fixes this gap.

\section{A sharp cusp count}\label{Main}

In this section, we present a sharp cusp count for complex hyperbolic surfaces which admit smooth toroidal compactifications. More precisely, we give a proof of Theorem \ref{Sharp} stated in the Introduction. The strategy of the proof relies on the fact that we are working in complex dimension two. In fact, the Kodaira-Enriques classification \cite{Van de Ven} of compact complex surface is crucially used. The other important ingredients are the Hirzebruch-Mumford proportionality and, in order to prove sharpness, an explicit toroidal compactification which can be extracted from the work of Hirzebruch in \cite{Hir1}.

\begin{proof}[Proof of Theorem \ref{Sharp}]
Given a two dimensional toroidal compactification $(X, D)$, let us observe that the holomorphic Euler characteristic, say $\chi(\oo_{X})$, of $X$ must be nonnegative. In fact, since
$\chi(\oo_{X})$ is a birational invariant, by the Kodaira-Enriques classification we know that the only surfaces with negative
holomorphic Euler characteristic are birational to ruled surfaces over Riemann
surfaces with genus greater or equal than two. It remains to show that such surfaces cannot be toroidal compactifications. The key observation is that they cannot contain any smooth elliptic curve. Assume not, then a smooth elliptic curve must be a section of the ruling. We then obtain a surjective morphism from the elliptic curve to the base of the ruling which we are assuming to be of genus greater or equal than two. This contradicts the classical genus formula of Riemann-Hurwitz \cite{Harris}.

Thus, by Noether's formula, see page 9 in \cite{Friedman}, and
Hirzebruch-Mumford proportionality we conclude that
\begin{align}\notag
c^{2}_{1}+c_{2}=\overline{c}^{2}_{1}+D^{2}+\frac{1}{3}\overline{c}^{2}_{1}\geq
0
\end{align}
which then implies
\begin{align}\notag
\frac{4}{3}\overline{c}^{2}_{1}\geq -D^{2}\geq q.
\end{align}
It remains to show that this bound is sharp. To this aim, recall the construction of a four cusps complex hyperbolic surface with normalized Riemannian volume equal to $\frac{8}{3}\pi^{2}$. Let $Y=\cc^{2}/\Gamma$ be a product Abelian surface, with $\Gamma=\Lambda_{1, \tau}\times\Lambda_{1, \tau}$ where $\Lambda_{1, \tau}$ is the lattice in $\cc$ generated by $1$ and $\tau=e^{\frac{\pi i}{3}}$. In $Y$, let us consider a curve $C$ defined by the equations
\begin{align}\notag
w=0, \quad z=0, \quad w=z, \quad w=\tau z
\end{align}
where $(w, z)$ are the natural product coordinates on $Y$. The divisor $C$ consists of four smooth disjoint elliptic curves intersecting at the point $p=(0, 0)$ only. Given $(Y, C)$, by blowing up the point $p$ we obtain a pair $(X, D)$, where $X$ is the blow up at $p$ of Y and $D$ the proper transform of $C$ which consits of four disjoint smooth elliptic curves. Now the pair $(X, D)$ can be shown to be a toroidal compactification, see \cite{Hir1} and Section 4 in \cite{Classification}. Thus, since 
\begin{align}\notag
(K_{X}+D)^{2}=K^{2}_{X}-D^{2}=3,
\end{align}
the bound is proven to be sharp and the proof is complete.
\end{proof}

Note that the proof of Theorem \ref{Sharp} gives also a bound of the self-intersection
of $D$ in terms of the self-intersection of the log-canonical divisor of the pair $(X, D)$. This fact is important when trying to apply Chow varieties techniques in complex dimension two. Thus, we summarize this bound into a corollary as it will be used in Section \ref{Applications}.

\begin{corollary}\label{topsurf}
Let $(X^{2}, D)$ be a toroidal compactification. Then 
\begin{align}\notag
0<-D^{2}\leq \frac{4}{3}(K_{X}+D)^{2}.
\end{align}
\end{corollary}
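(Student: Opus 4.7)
The plan is to observe that Corollary \ref{topsurf} is essentially a byproduct of the chain of inequalities already produced inside the proof of Theorem \ref{Sharp}, so no new machinery is required; I would just isolate the middle inequality and supplement it with the (easy) strict positivity on the left.

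For the upper bound, I would reproduce the key computation from the proof of Theorem \ref{Sharp}. Namely, since $\chi(\oo_X)\geq 0$ for any smooth toroidal compactification $(X,D)$ (by the Kodaira--Enriques argument ruling out ruled surfaces over curves of genus $\geq 2$, whose underlying minimal models cannot contain a smooth elliptic curve), Noether's formula combined with Hirzebruch--Mumford proportionality $3\overline{c}_2=\overline{c}_1^2$ yields
\bdism
c_1^2+c_2 \;=\; \overline{c}_1^2 + D^2 + \tfrac{1}{3}\overline{c}_1^2 \;\geq\; 0.
\edism
Rearranging gives exactly $-D^2 \leq \tfrac{4}{3}\overline{c}_1^2 = \tfrac{4}{3}(K_X+D)^2$, which is the right-hand inequality.

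For the strict lower bound $-D^2>0$, I would invoke the structural description of $D$ recalled in the introduction: the exceptional divisor of a toroidal compactification consists of pairwise disjoint smooth elliptic curves, each of which has strictly negative self-intersection (being an elliptic curve contained in the negatively curved locus $X\setminus D$'s boundary, compactifying a cusp). Disjointness gives $D^2 = \sum_i D_i^2$, and each $D_i^2<0$, so $D^2<0$, i.e.\ $-D^2>0$. Note that $q\geq 1$ is automatic from the hypothesis that $(X,D)$ is a toroidal compactification (the underlying lattice is non-uniform, so at least one cusp exists), which rules out the degenerate case $D=0$.

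There is really no main obstacle here: the corollary is just the extraction of an intermediate estimate in the proof of Theorem \ref{Sharp}, combined with a one-line remark on the geometry of $D$. The only thing to be careful about is to flag that the inequality $-D^2 \geq q$ (which was the route used in Theorem \ref{Sharp}) is \emph{not} needed for this corollary; we only need the weaker $-D^2 \leq \tfrac{4}{3}(K_X+D)^2$ together with $D^2<0$ coming from disjointness and negativity of components.
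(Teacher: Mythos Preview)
Your proposal is correct and matches the paper's approach exactly: the corollary is stated without a separate proof, as it is simply the intermediate inequality $\tfrac{4}{3}\overline{c}_1^2 \geq -D^2$ extracted from the proof of Theorem~\ref{Sharp}, together with the strict positivity of $-D^2$ coming from the structure of $D$ as a nonempty disjoint union of elliptic curves with negative self-intersection. Your observation that the full inequality $-D^2\geq q$ is not needed is accurate but immaterial, since the paper obtains $-D^2>0$ the same way (either via $q\geq 1$ or directly from $D_i^2<0$).
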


\section{Applications}\label{Applications}

In this section, we substantially improve the theory developed \cite{DiCerbo} in the special case of surfaces. In particular, we give the proof of Theorem \ref{Degree}, \ref{Baily1} and \ref{Picard} stated in the introduction. These improvements not only rely on Theorem \ref{Sharp}, but also on the fact that the birational geometry of
adjoint linear systems on surfaces is completely understood thanks
to the theorem of Reider. Let us recall the statement of this
theorem, for more details see \cite{Reider}.

\begin{theorem}[Reider]\label{reider}
Let $X$ be a smooth surface and let $L$ be a nef line bundle on $X$.
Then the following hold:
\begin{enumerate}

\item If $L^{2}\geq 5$ and $x\in X$ is a base point of $|K_{X}+L|$ then there exists a curve $C$ through $x$ such that either
\begin{align} \notag
& C\cdot L=0 \:\ \text{and}\:\  C^{2}=-1;\  or \\ \notag & C\cdot
L=1 \:\ \text{and}\:\ C^{2}=0.
\end{align}

\item If $L^{2}\geq 10$ and $x,y\in X$ are two (possibly infinitely near) points which $|K_{X}+L|$ does not
separate then there exists a curve $C$ through $x$ and $y$ such that
either
\begin{align} \notag
& C\cdot L=0 \:\ \text{and}\:\  C^{2}\in\left\{-1,-2\right\};\  or
\\ \notag & C\cdot L=1 \:\ \text{and}\:\
C^{2}\in\left\{0,-1\right\};\  or \\ \notag & C\cdot L=2 \:\
\text{and}\:\  C^{2}=0.
\end{align}

\end{enumerate}
\end{theorem}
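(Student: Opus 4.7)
The plan is to follow Reider's original strategy: translate the failure of $|K_X+L|$ to be base-point-free (case (1)) or to separate two (possibly infinitely near) points (case (2)) into the existence of a Bogomolov-unstable rank-two vector bundle on $X$, and then read off the claimed curve $C$ from the destabilizing subbundle. Concretely, let $Z\subset X$ denote the zero-dimensional subscheme in question, of length $1$ in case (1) and length $2$ in case (2). The assumed failure both forces $H^1(X, \jj_Z\otimes(K_X+L))\neq 0$ and implies that the Cayley--Bacharach condition holds for $Z$, so Serre's construction produces a locally free extension
\[
0 \to \oo_X \to E \to \jj_Z\otimes L \to 0,
\]
with $c_1(E) = L$ and $c_2(E) = \operatorname{length}(Z)$.

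Next, I would invoke Bogomolov's instability theorem. Under $L^2 \geq 5$ in case (1) one has $c_1(E)^2 - 4c_2(E) = L^2 - 4 > 0$, and under $L^2 \geq 10$ in case (2) one has $L^2 - 8 > 0$, so $E$ is Bogomolov unstable in both cases. Hence there exists a saturated rank-one subsheaf $\oo_X(M)\hookrightarrow E$ such that $N := 2M - L$ satisfies $N^2 > 0$ and $N \cdot H > 0$ for every ample class $H$. Because $\oo_X(M)$ is saturated, the composition $\oo_X(M) \to E \to \jj_Z\otimes L$ is nonzero, and produces an effective divisor $C \in |L-M|$ containing $Z$.

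Finally, the main task is the numerical analysis. Substituting $M = L - C$, the positivity conditions become $(L-2C)^2 > 0$ and $(L-2C)\cdot H > 0$ for every ample $H$. Combined with $L\cdot C \geq 0$ (by nefness of $L$) and the effectivity of $C$, these give simultaneous bounds on $L\cdot C$ and $C^2$ that force the pair into one of the tabulated cases. The hardest part is managing the borderline numerics, where $C$ may a priori be reducible or non-reduced: one must then replace $C$ by an appropriate irreducible component through $Z$ while keeping the intersection-theoretic data under control. A clean implementation requires choosing the ample class $H$ as a small perturbation of $L$, so that the strict inequalities for $N$ specialize cleanly to the inequalities involving $L$ needed in the case analysis, and explains why the thresholds $5$ and $10$ (rather than $4$ and $9$) are the correct ones to impose.
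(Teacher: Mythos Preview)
The paper does not actually prove this statement: Reider's theorem is quoted as a known result and the reader is referred to \cite{Reider} for the proof. Your proposal is a faithful outline of Reider's original argument (Serre construction of a rank-two bundle from the failure of separation, Bogomolov instability via $c_1^2>4c_2$, extraction of the destabilizing subsheaf and the effective curve $C$, and the final case analysis on $L\cdot C$ and $C^2$), so there is nothing to compare against here beyond noting that your sketch matches the standard proof in the cited reference.
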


The next theorem exhibits a two dimensional toroidal compactification $(X, D)$ as a smooth sub-variety of $\pp^{5}$. Moreover, it gives a bound on its degree as well as on the degree of the irreducible components of $D$.

\begin{proof}[Proof of Theorem \ref{Degree}]
Let $L:=K_{X}+D$. Because of Theorem \ref{Gap}, we know that $2K_{X}+D=2L-D$ is ample.
Theorem \ref{reider} implies us that $K_{X}+3(2L-D)=7L-4D$ is very
ample. In particular, $7L-4D$ embeds $X$ as a smooth sub-variety of
some projective space of degree $d=(7L-4D)^{2}\leq 49 L^{2}$. By
general projection, for details see page 173 in \cite{Harris}, we can always reduce to $\pp^{5}$ since $X$ is two dimensional. Next, we need to bound the degree of the components $D_{i}$ of $D$ under the embedding given by the line bundle $7L-4D$. First, recall that $D_{i}\cdot D_{j}=0$ if $i\neq j$. Then
\begin{align} \notag
d_{i}&=D_{i}\cdot (7 L-4 D) =-4 D_{i}\cdot D \\ \notag &=-4
D_{i}^{2}\leq -4 D^{2}< 6 L^{2},
\end{align}
where the last inequality is given by Corollary \ref{topsurf}.
\end{proof}

The previous theorem can be used to count the number of toroidal compactifications with fixed volume. 
We will estimate this number bounding the number of pairs in a fixed projective space of bounded degrees as in Theorem \ref{Degree}. This can be achieved studying a particular Chow variety. To ease the proof of the result, let us introduce some notation. We define $\Chow_{m}(d;d_{1},\dots, d_{q})$ to be the closed subvariety of 
\bdism
\Chow_{m}(2,d)\times \Chow_{m}(1,d_{1})\times \dots \times \Chow_{m}(1,d_{q}),
\edism 
parametrizing pairs $(X,\sum_{i=1}^{q}D_{i})$ where $X$ is a smooth surface of degree $d$ in $\pp^{m}$ and each $D_{i}$ is a smooth one-dimensional subvariety of degree $d_{i}$ in $\pp^{m}$ contained in $X$.

The following corollary is a combination of the previous results and an effective bound on the number of components of $\Chow(d:d_{1},\dots,d_{q})$. We give only a sketch of the proof since it is essentially the same as in \cite{DiCerbo}. 

\begin{corollary}\label{Count}
Fix a positive integer $V$. Then the number of toroidal
compactifications with $\dim(X)=2$ and $(K_{X}+D)^{2}\leq V$ is
bounded by 

\bdism 
\sum_{q=1}^{q_{0}}
d_{0}(6V)^{q}\binom{6d_{0}}{5}^{6(q+1)\binom{5+d_{0}}{5}}, 
\edism

where $d_{0}=49 V$ and $q_{0}=\left\lfloor 1.33 V\right\rfloor$.

\end{corollary}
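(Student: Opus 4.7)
The plan is a direct two-dimensional analogue of the Chow-variety counting carried out in \cite{DiCerbo}, with the inputs strengthened by our new sharper bounds. Given a toroidal compactification $(X,D)$ with $(K_X+D)^2\leq V$, Theorem \ref{Degree} realises the pair inside $\pp^5$: the surface $X$ sits there as a smooth subvariety of degree $d\leq 49V=d_0$, while each elliptic boundary component $D_i$ has degree $d_i<6V$. Theorem \ref{Sharp} further bounds the number of components by $q\leq \lfloor (4/3)V\rfloor\leq q_0$.

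These embeddings correspond to points of the product Chow variety $\Chow_5(d;d_1,\dots,d_q)$. Two embeddings produce isomorphic abstract pairs exactly when they lie in a common $\operatorname{PGL}_6(\cc)$-orbit, and since $\operatorname{PGL}_6(\cc)$ is connected, each isomorphism class meets only one irreducible component. Consequently, the number of toroidal compactifications with $(K_X+D)^2\leq V$ is at most the sum, over all admissible tuples $(d;d_1,\dots,d_q)$ and all $1\leq q\leq q_0$, of the number of irreducible components of the corresponding Chow variety.

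Counting the tuples for fixed $q$ contributes a factor of at most $d_0\cdot(6V)^q$, explaining the prefactor of each summand. The final ingredient is an effective bound on the number of irreducible components of $\Chow_5(r,\delta)$. Following the classical approach, one embeds the Chow variety into a projective space whose dimension is controlled by $\binom{5+\delta}{5}$, bounds its degree by a $\binom{6\delta}{5}$-type expression, and multiplies over the $q+1$ factors of the product. This yields the component bound $\binom{6d_0}{5}^{6(q+1)\binom{5+d_0}{5}}$, and assembling the three factors produces the stated inequality.

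The main obstacle, already overcome in \cite{DiCerbo} in arbitrary dimension, is the explicit count of irreducible components of Chow varieties; in our setting the substance of the improvement is that Theorem \ref{Sharp} furnishes the sharp upper bound $q_0=\lfloor 1.33V\rfloor$ on the number of components, while Theorem \ref{Degree} provides the embedding into $\pp^5$ with the degree bounds $d\leq d_0$ and $d_i<6V$. All remaining steps are essentially bookkeeping, for which I would refer to the parallel argument in \cite{DiCerbo}.
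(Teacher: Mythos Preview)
Your outline follows the paper's strategy almost verbatim---embed via Theorem~\ref{Degree}, bound $q$ via Theorem~\ref{Sharp}, and count irreducible components of the relevant Chow varieties---but there is one genuine gap in the logic. You argue that, because $\operatorname{PGL}_6(\cc)$ is connected, each isomorphism class of pairs sits inside a single irreducible component of $\Chow_5(d;d_1,\dots,d_q)$, and then conclude that the number of isomorphism classes is bounded by the number of components. That implication runs in the wrong direction: what you have shown is a well-defined map from isomorphism classes to components, not that this map is injective. Nothing in your argument rules out two non-isomorphic toroidal compactifications lying in the same irreducible component of the Chow variety.

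The paper closes this gap by invoking Fujiki's theorem that the pair $(X,D)$ is infinitesimally rigid. Rigidity ensures that in any family of such pairs over a connected base all fibres are isomorphic; hence each irreducible component of the Chow variety can carry at most one isomorphism class of toroidal compactification, which is exactly the inequality you need. Your $\operatorname{PGL}_6$-orbit argument is not a substitute for this step. (A smaller point: after a generic projection to $\pp^5$ the claim that two embeddings of the same abstract pair differ by an element of $\operatorname{PGL}_6$ is no longer automatic, since the projection introduces non-canonical choices; but this is moot once you use rigidity, which makes the orbit description unnecessary.) With Fujiki's rigidity inserted in place of the connectedness argument, the rest of your sketch matches the paper's proof.
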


\begin{proof}
We can embed $(X,D)$ in $\pp^{5}$ with bounded degrees using Theorem \ref{Degree}. Let $d$ be the degree of $X$ and let $d_{i}$ be the degree of $D_{i}$. The pair $(X,D)$ corresponds to a point of $\Chow_{5}(d;d_{1},\dots, d_{q})$. By a theorem of Fujiki in \cite{Fujiki}, $(X,D)$ is infinitesimally rigid under deformations of the pair. In particular, the number of irreducible components of $\Chow_{5}(d;d_{1},\dots,d_{q})$ provides an upper bound on the number of toroidal compactifications with fixed degrees $d$ and $d_{1},\dots,d_{q}$. It is not difficult to extend the classical bound on the number of irreducible components of Chow varieties in \cite{Kollar} to this setting, see Proposition 3.2 in \cite{Hwa2}. Combining this formula and the bounds provided by Theorem \ref{Degree} we obtain the result. 
\end{proof}

Next, we can explicitly construct the Baily-Borel compactifications of cusped complex hyperbolic surfaces as projective algebraic varieties. In other words, we give the proof of Theorem \ref{Baily1} stated in the Introduction.

\begin{proof}[Proof of Theorem \ref{Baily1}]
Let $L:=K_{X}+D$. Since
\begin{align}\notag
3\overline{c}_{2}=\overline{c}^{2}_{1}, \quad \overline{c}_{2}>0,
\end{align} 
we conclude that $L^{2}\geq 3$. Note that the null locus of $L$ is precisely $D$, i.e., the only curves that intersect trivially with the log-canonical are contained in $D$. By Reider's theorem, $K_{X}+2L$ is base point free
away from $D$. Adding $D$ we get that $3(K_{X}+D)$ is base point
free away from $D$. Next, let us consider the short exact sequence
\bdism 0\rightarrow \oo_{X}(2K_{X}+D)\stackrel{\cdot
D}{\rightarrow} \oo_{X}(2(K_{X}+D))\rightarrow \oo_{D}\rightarrow 0.
\edism 
By Kawamata-Viehweg's vanishing, see Theorem 12.2 page 181 in \cite{Van de Ven}, we have that
\begin{align}\notag
H^{1}(X,\oo_{X}(2K_{X}+D))=0.
\end{align}
Thus, taking the long exact sequence
in cohomology, there exists a surjective map \bdism
H^{0}(X,\oo_{X}(2(K_{X}+D)))\rightarrow \bigoplus_{i=1}^{q}
H^{0}(D_{i},\oo_{D_{i}}), \edism
which then implies that $2L$ is base point free along $D$.
Finally, we conclude by applying Reider's theorem to
$K_{X}+3L$.
\end{proof}

Let us conclude this section by deriving a two sided bound on the Picard number of two dimensional toroidal compactifications. More precisely, we give the proof of Theorem \ref{Picard} stated in the introduction. 

\begin{proof}[Proof of Theorem \ref{Picard}]
For any complex $4$-manifold, we have
\begin{align}\notag
p_{1}(X)=c^{2}_{1}-2c_{2}
\end{align}
where $p_{1}$ is the first Pontrjagin class. By Hodge index and
Hirzebruch's signature formula, we conclude that
\begin{align}\notag
2p_{g}(X)+2-h^{1, 1}(X)=\frac{1}{3}(c^{2}_{1}-2c_{2}),
\end{align}
where $p_{g}(X)=h^{0}(X, \mathcal{O}_{X}(K_{X}))$. As observed in the proof of Theorem \ref{Sharp}, the holomorphic Euler characteristic of $X$ is necessarily nonnegative. Noether's formula then implies that
\begin{align}\notag
h^{1, 1}(X)\leq 2p_{g}(X)+2-4\chi(\oo_{X})+c_{2}\leq
2p_{g}(X)+2+c_{2}.
\end{align}
Next, let us observe that since $D$ is
effective then
\begin{align}\notag
p_{g}(X)\leq h^{0}(X, \mathcal{O}_{X}(K_{X}+D)).
\end{align}
By a theorem of Matsusaka, see \cite{Kollar},
\begin{align}\notag
h^{0}(X, \mathcal{O}_{X}(K_{X}+D))\leq 2(K_{X}+D)^{2}+2,
\end{align}
which finally implies
\begin{align}\notag
h^{1, 1}(X)\leq 4+\frac{13}{3}(K_{X}+D)^{2}.
\end{align}
It remains to show the existence of a sharp lower bound for $\rho(X)$ in terms of the number of cusps. Say that $q$ is the number of components of $D$. Suppose by contradiction that $\rho(X)\leq q$. Let $H$ be an ample
divisor on $X$. Then there exists a relation of the form \bdism
a_{0}H+\sum_{i=1}^{q}a_{i}D_{i}\equiv 0, \edism with not all of the $\{a_{i}\}^{q}_{i=0}\in\rr$ equal to zero.
Since $D_{i}\cdot D_{j}=0$ for all $j\neq i$, and $D^{2}_{i}<0$ for all $i$, the
irreducible components of $D$ are not numerically equivalent and we can then
assume $a_{0}=1$. Intersecting the above relation with $D_{i}$, we get that $a_{i}>0$ for all $i$. On the
other hand \bdism
0=H\cdot\left(H+\sum_{i=1}^{q}a_{i}D_{i}\right)=H^{2}+\sum_{i=1}^{q}a_{i}D_{i}\cdot
H>0, \edism gives a contradiction. Finally, we need to prove that this lower bound is sharp. To this aim, let $(X, D)$ be the particular toroidal compactification used to show that Theorem \ref{Sharp} was sharp. Then $X$ is the blow up of a very particular Abelian surface $Y$ with maximal Picard number, i.e., $\rho(Y)=h^{1, 1}(Y)=4$. We then conclude that $\rho(X)=5$. Since $D$ consists of four irreducible components, the lower bound is then sharp.   
\end{proof}

\end{document}